\newtheorem{theorem}{Theorem}
\newtheorem*{theorem*}{Theorem}
\newtheorem{proposition}[theorem]{Proposition}
\newtheorem*{proposition*}{Proposition}
\newtheorem{lemma}[theorem]{Lemma}
\newtheorem{definition}[theorem]{Definition}
\newtheorem{assumption}[theorem]{Assumption}
\renewcommand{\eqref}[1]{Eq.~(\ref{#1})}
\newcommand{\thmref}[1]{Theorem~\ref{#1}}
\renewcommand{\S}{\mathbb{S}}
\newcommand{\out}{\mathrm{out}}
\newcommand{\conv}{\mathrm{conv}}
\newcommand{\Unif}{\mathrm{Unif}}
\newcommand{\reals}{\mathbb{R}}
\newcommand{\B}{\mathbb{B}}
\newcommand{\bpar}{\bar{\partial}}
\newcommand{\vertiii}[1]{{\left\vert\kern-0.25ex\left\vert\kern-0.25ex\left\vert #1 
    \right\vert\kern-0.25ex\right\vert\kern-0.25ex\right\vert}}
\newcommand{\beq}{\begin{eqnarray*}}
\newcommand{\eeq}{\end{eqnarray*}}
\newcommand{\beqn}{\begin{eqnarray}}
\newcommand{\eeqn}{\end{eqnarray}}
\newcommand{\ent}[1][]{%
\ifthenelse{\isempty{#1}}{%
\mathrm{H}
}{
\mathrm{H}^{(#1)}
}}
\newcommand{\loch}[1][]{%
\ifthenelse{\isempty{#1}}{%
\mathrm{h}
}{
\mathrm{h}^{(#1)}
}}
\newcommand{\NN}{\mathbb{N}}
\newcommand{\E}{\mathbb{E}}
\newcommand{\bx}{\mathbf{x}}
\newcommand{\bw}{\mathbf{w}}
\newcommand{\bg}{\mathbf{g}}
\newcommand{\bz}{\mathbf{z}}
\newcommand{\by}{\mathbf{y}}
\newcommand{\norm}[1]{\left\|#1\right\|}
\newcommand{\inner}[1]{\langle#1\rangle}
\title{
An Algorithm with Optimal Dimension-Dependence for Zero-Order Nonsmooth Nonconvex Stochastic Optimization
}
\author{
Guy Kornowski
\qquad
Ohad Shamir
\vspace{3pt}
\\
Weizmann Institute of Science \\
\texttt{\{guy.kornowski,ohad.shamir\}@weizmann.ac.il}  
}
\begin{document}

\maketitle

\begin{abstract}
We study the complexity of producing $(\delta,\epsilon)$-stationary points of Lipschitz objectives which are possibly neither smooth nor convex, using only noisy function evaluations.
Recent works proposed several stochastic zero-order algorithms that solve this task, all of which suffer from a dimension-dependence of $\Omega(d^{3/2})$ where $d$ is the dimension of the problem, which was conjectured to be optimal. We refute this conjecture by providing a faster algorithm that has complexity $O(d\delta^{-1}\epsilon^{-3})$, which is optimal (up to numerical constants) with respect to $d$ and also optimal with respect to the accuracy parameters $\delta,\epsilon$, thus solving an open question due to \citet{lin2022gradient}. Moreover, the convergence rate achieved by our algorithm
is also optimal for smooth objectives,
proving that in the nonconvex stochastic zero-order setting, \emph{nonsmooth optimization is as easy as smooth optimization}. 
We provide algorithms that achieve the aforementioned convergence rate in expectation as well as with high probability.
Our analysis is based on a simple yet powerful lemma regarding the Goldstein-subdifferential set,
which allows utilizing recent advancements in first-order nonsmooth nonconvex optimization.
\end{abstract}

\section{Introduction}

We consider the problem of optimizing a stochastic objective of the form
\[
f(\bx)=\E_{\xi\sim\Xi}[F(\bx;\xi)]
\]
where the stochastic components $F(\,\cdot\,;\xi):\reals^d\to\reals$
are Lipschitz continuous,
yet possibly not smooth nor convex. We consider 
stochastic
zero-order (also known as \emph{gradient-free} or \emph{derivative-free}) algorithms that have access only to noisy function evaluations. At each time step, the algorithm draws $\xi\sim\Xi$ and can observe $F(\bx,\xi)$ for points $\bx\in\reals^d$ of its choice.
Problems of this type arise throughout machine learning, control theory and finance, in applications in which gradients are expensive (or even impossible) to evaluate, see for example the book by \citet{spall2005introduction} for an overview.
Although in the convex setting the complexity of such algorithms is relatively well understood \citep{agarwal2010optimal,duchi2015optimal,nesterov2017random,shamir2017optimal}, much less is known about the nonsmooth nonconvex setting. This challenging setting is of major interest in modern deep learning applications, where objective functions of interest are typically neither smooth or convex. For example, stochastic zero-order optimization methods were applied to fine-tune large language models \citep{malladi2023fine}.

Recently, \citet{lin2022gradient} proposed a gradient-free algorithm that produces a $(\delta,\epsilon)$-stationary point using $O(d^{3/2}\delta^{-1}\epsilon^{-4})$ function evaluations.
Following \citet{Zhang-2020-Complexity}, recall that a point $\bx$ is called a $(\delta,\epsilon)$-stationary point if there exists a convex combination of gradients in a $\delta$-neighborhood of $\bx$ whose norm is less than $\epsilon$ (see Section~\ref{sec: prelim} for a reminder of relevant definitions).
\citet{lin2022gradient} posed the question as to whether this super-linear dimension dependence is inevitable or not.
The aforementioned complexity was very recently improved to $O(d^{3/2}\delta^{-1}\epsilon^{-3})$ by \citet{chen2023faster}, yet notably, this result still suffers from the same super-linear dimension dependence.

In particular, as pointed out by \citeauthor{lin2022gradient}, this dimension dependence is $\Omega(\sqrt{d})$ worse than that of stochastic zero-order \emph{smooth} nonconvex optimization, a setting in which it is possible to find an $\epsilon$-stationary point (i.e. $\bx$ such that $\norm{\nabla f(\bx)}\leq\epsilon$)
using $O(d\epsilon^{-4})$ noisy function evaluations \citep{ghadimi2013stochastic}. This led the authors to conjecture that stochastic zero-order nonsmooth nonconvex optimization is
``likely to be intrinsically harder''
than its smooth counterpart.

Our main contribution resolves this open question, showing that this is actually \emph{not} the case. We propose a
faster
zero-order algorithm for nonsmooth nonconvex optimization, which requires only $O(d\delta^{-1}\epsilon^{-3})$ noisy function evaluations. As we will soon argue, this complexity has an optimal linear-dependence on the dimension $d$, while also obtaining the optimal dependence with respect to the accuracy parameters $\delta$ and $\epsilon$.
All of these dependencies are known to be optimal even if $f(\cdot)$ is smooth, implying that in the stochastic zero-order setting, \emph{nonsmooth nonconvex optimization is as easy as smooth nonconvex optimization}.
Moreover, when the objective is smooth, our proposed algorithm automatically recovers the $O(d\epsilon^{-4})$ complexity of stochastic gradient-free smooth nonconvex optimization \citep{ghadimi2013stochastic}.
Whether this adaptivity property is possible was originally raised as an open question by \citet{Zhang-2020-Complexity} in the context of first-order algorithms
(that have access to gradient information), and was recently 
confirmed
by \citet{cutkosky2023optimal}. Our result extends the resolution of this question to the case of zero-order algorithms.

As previously mentioned, the dependencies on $d,\delta$ and $\epsilon$ we obtain are all optimal. Indeed,
the linear dimension dependence is well-known to be inevitable for gradient-free algorithms even in the strictly-easier cases of smooth or convex optimization \citep{duchi2015optimal}, while the implied $\epsilon^{-4}$ factor is known to be inevitable even in the strictly-easier case of stochastic first-order smooth optimization with exact function evaluations \citep{arjevani2023lower}.\footnote{Technically, the lower bound construction in \citet{arjevani2023lower} is not globally Lipschitz, yet a slight modification of it which appears in \citet[Appendix F]{cutkosky2023optimal} is.}
Interestingly, in terms of the dependence on $\delta$ and $\epsilon$, the convergence rate we obtain is as fast as the currently best-known \emph{deterministic first-order} algorithms for nonsmooth nonconvex optimization \citep{Zhang-2020-Complexity,Tian-2022-Finite,Davis-2022-Gradient}. This is in stark contrast to smooth nonconvex optimization, in which optimal stochastic and deterministic methods have disparate complexities on the order of $\epsilon^{-4}$ and $\epsilon^{-2}$, respectively \citep{arjevani2023lower,carmon2020lower}.
We also note that our algorithm is a factor of $\Omega(\sqrt{d})$ faster even than the previously best-known rate for deterministic (i.e. noiseless, when $\Xi=\{\xi\}$) zero-order nonsmooth nonconvex optimization, and that it also obtains an improved dependence on the Lipschitz parameter when compared to the previously mentioned works in this setting.
Finally, we remark that while the dependence on each parameter by itself (i.e. $d,\delta$ and $\epsilon$) is already known to be optimal, currently there is no result in the literature formally proving a lower bound \emph{jointly} in these parameters, namely of the form $\Omega(d\delta^{-1}\epsilon^{-3})$, which will automatically follow from a smooth (joint) lower bound of the form $\Omega(d\epsilon^{-4})$.
We conjecture these results should be obtainable by modifying the analysis of \citet{arjevani2023lower} to incorporate zero-order oracles.

\section{Preliminaries.} \label{sec: prelim}

\paragraph{Notation.}
We use bold-faced font to denote vectors, e.g. $\bx\in\reals^d$, and denote by $\norm{\bx}$ the Euclidean norm.
We denote by $[n]:=\{1,\dots,n\},~\B(\bx,\delta):=\{\by\in\reals^d:\norm{\by-\bx}\leq\delta\}$, and by $\S^{d-1}\subset\reals^d$ the unit sphere.
We denote by $\textnormal{conv}(\cdot)$ the convex hull operator, and by $\mathrm{Unif}(A)$ the uniform measure over a set $A$.
We use the standard big-O notation, with $O(\cdot)$, $\Theta(\cdot)$ and $\Omega(\cdot)$ hiding absolute constants that do not depend on problem parameters, $\tilde{O}(\cdot)$ and $\tilde{\Omega}(\cdot)$ hiding absolute constants and additional logarithmic factors.

\paragraph{Nonsmooth analysis.}
We call a function $f:\reals^d\to\reals$ $L$-Lipschitz if for any $\bx,\by\in\reals^d:|f(\bx)-f(\by)|\leq L\norm{\bx-\by}$, and $H$-smooth if it is differentiable and $\nabla f:\reals^d\to\reals^d$ is $H$-Lipschitz, namely for any $\bx,\by\in\reals^d:\norm{\nabla f(\bx)-\nabla f(\by)}\leq H\norm{\bx-\by}$.
By Rademacher's theorem, Lipschitz functions are differentiable almost everywhere (in the sense of Lebesgue). Hence, for any Lipschitz function $f:\reals^d\to\reals$ and point $\bx\in\reals^d$ the Clarke subgradient set \citep{Clarke-1990-Optimization} can be defined as
\[
\partial f(\bx):=\conv\{\bg\,:\,\bg=\lim_{n\to\infty}\nabla f(\bx_n),\,\bx_n\to \bx\}~,
\]
namely, the convex hull of all limit points of $\nabla f(\bx_n)$ over all sequences of differentiable points which converge to $\bx$.
Note that if the function is continuously differentiable at a point or convex, the Clarke subdifferential reduces to the gradient or subgradient in the convex analytic sense, respectively.
We say that a point $\bx$ is an $\epsilon$-stationary point of $f(\cdot)$ if $\min\{\norm{\bg}:\bg\in\partial f(\bx)\}\leq\epsilon$.
Furthermore, given $\delta\geq0$ the Goldstein $\delta$-subdifferential \citep{Goldstein-1977-Optimization} of $f$ at $\bx$ is the set
\[
\partial_{\delta}f(\bx):=\conv\left(\cup_{\by\in\B(\bx,\delta)}\partial f(\by)\right)~,
\]
namely all convex combinations of gradients at points in a $\delta$-neighborhood of $\bx$.
We denote the minimum-norm element of the Goldstein $\delta$-subdifferential by
\[
\bpar_{\delta}f(\bx):={\arg\min}_{\bg\in\partial_{\delta} f(\bx)}\norm{\bg}~.
\]

\begin{definition}
A point $\bx$ is called a $(\delta,\epsilon)$-stationary point of $f(\cdot)$ if $\norm{\bpar_{\delta}f(\bx)}\leq\epsilon$.
\end{definition}

Note that a point is $\epsilon$-stationary if and only if it is $(\delta,\epsilon)$-stationary for all $\delta\geq 0$ \citep[Lemma 7]{Zhang-2020-Complexity}.
Moreover, if $f$ is $H$-smooth and $\bx$ is a $(\frac{\epsilon}{3H},\frac{\epsilon}{3})$-stationary point of $f$, then it is also $\epsilon$-stationary \citep[Proposition 6]{Zhang-2020-Complexity}.

\paragraph{Randomized smoothing.}

Given a Lipschitz function $f:\reals^d\to\reals$, we define its uniform smoothing
\[
f_\rho(\bx):=\E_{\bz\sim\Unif(\B(\mathbf{0},1))}[f(\bx+\rho\bz)]~.
\]
It is well known (cf. \citealp{yousefian2012stochastic}) that if $f$ is $L_0$-Lipschitz, then 
\begin{itemize}
    \item $f_\rho$ is $L_0$-Lipschitz;
    \item $f_\rho$ is $O(\sqrt{d}L_0\rho^{-1})$-smooth;
    \item $|f(\bx)-f_{\delta}(\bx)|\leq \rho L_0$ for all $\bx\in\reals^d$.
\end{itemize}

\paragraph{Setting.}

We consider optimization objectives of the form $f(\bx)=\E_{\xi\sim\Xi}[F(\bx;\xi)]$, where $\xi\sim\Xi$ is a random variable.
We impose the assumption that the stochastic components $F(\,\cdot\,;\xi):\reals^d\to\reals$ are Lipschitz continuous, possibly with a  varying Lipschitz constant:

\begin{assumption} \label{ass: Lip}
For any $\xi$, the function $F(\,\cdot\,;\xi)$ is $L(\xi)$-Lipschitz. Moreover, we assume $L(\xi)$ has a bounded second moment: Namely, there exists $L_0>0$ such that 
\[
\E_{\xi\sim\Xi}[L(\xi)^2]\leq L_0^2~.
\]
\end{assumption}

We note that Assumption~\ref{ass: Lip} is weaker than assuming $F(\,\cdot\,;\xi)$ is $L_0$-Lipschitz for all $\xi$. We also remark that in case $\Xi$ is supported on a single point
then the optimization problem reduces to that of a $L_0$-Lipschitz objective using exact evaluations.

\section{Algorithms and Main Results} \label{sec: main}

Before formally presenting our main result, we find it insightful to stress out the key idea, and in particular how our algorithm differs than those of \citet{lin2022gradient,chen2023faster}.
The main strategy employed by both of these papers is based on the following result.

\begin{proposition}[\citealp{lin2022gradient}, Theorem 3.1] \label{thm: Lin's randomized smoothing}
For any $\rho\geq 0:\nabla f_\rho(\bx)\in\partial_\rho f(\bx)$. Hence, for $\rho=\delta$, if $\bx$ is an $\epsilon$-stationary point of $f_\delta$, then it is a $(\delta,\epsilon)$-stationary point of $f$. 
\end{proposition}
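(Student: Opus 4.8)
The plan is to first establish the inclusion $\nabla f_\rho(\bx)\in\partial_\rho f(\bx)$ for every $\rho\ge0$ and every $\bx$, from which the stationarity claim follows in one line. The idea for the inclusion is that $f_\rho$ is a mollification of $f$, so its gradient ought to be an average of honest gradients of $f$ at points within distance $\rho$ of $\bx$. Concretely I would show
\[
\nabla f_\rho(\bx)=\E_{\bz\sim\Unif(\B(\zero,1))}\!\left[\nabla f(\bx+\rho\bz)\right],
\]
noting that the integrand is well defined for $\Unif(\B(\zero,1))$-almost every $\bz$: by Rademacher's theorem the set on which $f$ is not differentiable is Lebesgue-null, and the pushforward of $\Unif(\B(\zero,1))$ under $\bz\mapsto\bx+\rho\bz$ is $\Unif(\B(\bx,\rho))$, which is absolutely continuous with respect to Lebesgue measure. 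Since $\nabla f(\by)\in\partial f(\by)$ at every differentiability point $\by$, and since $\bx+\rho\bz\in\B(\bx,\rho)$ whenever $\bz\in\B(\zero,1)$, the integrand lies in $\partial f(\bx+\rho\bz)\subseteq\partial_\rho f(\bx)$ almost surely. The set $\partial_\rho f(\bx)$ is convex by definition and is moreover compact: the set-valued map $\by\mapsto\partial f(\by)$ is locally bounded with closed graph, so its image over the compact ball $\B(\bx,\rho)$ is compact, and the convex hull of a compact subset of $\reals^d$ is compact by Carath\'eodory's theorem. A closed convex set contains the barycenter of any probability measure supported on it — it is an intersection of closed half-spaces, and $\langle\bv,\cdot\rangle\le c$ a.s.\ implies $\langle\bv,\E[\cdot]\rangle\le c$ — so $\E_\bz[\nabla f(\bx+\rho\bz)]\in\partial_\rho f(\bx)$, giving the inclusion.

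The step I expect to require the most care is the interchange of gradient and expectation in the displayed identity, since $f$ itself is differentiable only almost everywhere. The randomized smoothing $f_\rho$ is differentiable everywhere (indeed $O(\sqrt{d}\,L_0\rho^{-1})$-smooth, as recalled in the preliminaries), so for each coordinate $i$ the limit $\partial_i f_\rho(\bx)=\lim_{h\to0}\tfrac{1}{h}\big(f_\rho(\bx+h\be_i)-f_\rho(\bx)\big)$ exists and equals $\lim_{h\to0}\E_\bz\big[\tfrac{1}{h}\big(f(\bx+\rho\bz+h\be_i)-f(\bx+\rho\bz)\big)\big]$; the inner difference quotients are bounded in absolute value by the Lipschitz constant of $f$ (hence uniformly dominated over the probability space) and converge to $\partial_i f(\bx+\rho\bz)$ for almost every $\bz$, so the dominated convergence theorem yields $\partial_i f_\rho(\bx)=\E_\bz[\partial_i f(\bx+\rho\bz)]$, and hence the vector identity. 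An alternative route that avoids writing $\nabla f_\rho$ explicitly is to bound, for every unit vector $\bu$, the directional derivative $\langle\bu,\nabla f_\rho(\bx)\rangle=\lim_{h\downarrow0}\E_\bz\big[\tfrac{1}{h}\big(f(\bx+\rho\bz+h\bu)-f(\bx+\rho\bz)\big)\big]$ via Lebourg's mean value theorem applied to each difference quotient (which equals $\langle\bu,\bg\rangle$ for some $\bg$ in a Clarke subdifferential along the segment, lying in $\partial_\rho f(\bx)$ once $h$ is small) together with the reverse Fatou lemma, obtaining $\langle\bu,\nabla f_\rho(\bx)\rangle\le\max_{\bg\in\partial_\rho f(\bx)}\langle\bu,\bg\rangle$ for all $\bu$; since $\partial_\rho f(\bx)$ is closed and convex, the separating hyperplane theorem then forces $\nabla f_\rho(\bx)\in\partial_\rho f(\bx)$.

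For the final assertion, take $\rho=\delta$. Since $f_\delta$ is differentiable everywhere, $\partial f_\delta(\bx)=\{\nabla f_\delta(\bx)\}$, so the hypothesis that $\bx$ is an $\epsilon$-stationary point of $f_\delta$ is exactly $\norm{\nabla f_\delta(\bx)}\le\epsilon$. Combining with $\nabla f_\delta(\bx)\in\partial_\delta f(\bx)$ from the first part,
\[
\norm{\bpar_\delta f(\bx)}=\min_{\bg\in\partial_\delta f(\bx)}\norm{\bg}\le\norm{\nabla f_\delta(\bx)}\le\epsilon,
\]
i.e.\ $\bx$ is a $(\delta,\epsilon)$-stationary point of $f$, as claimed.
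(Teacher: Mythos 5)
The paper states this proposition without proof, citing Theorem~3.1 of \citet{lin2022gradient}; your argument correctly reproduces the standard proof of that cited result. You derive $\nabla f_\rho(\bx)=\E_{\bz\sim\Unif(\B(\zero,1))}\left[\nabla f(\bx+\rho\bz)\right]$ by dominated convergence on Lipschitz-bounded difference quotients (the integrand is defined a.e.\ since the pushforward measure is absolutely continuous and $f$ is differentiable Lebesgue-a.e.), show that this barycenter of elements of the compact convex set $\partial_\rho f(\bx)$ lies in the set via supporting hyperplanes, and correctly reduce $\epsilon$-stationarity of the everywhere-differentiable $f_\delta$ to the gradient-norm condition --- so the proof is correct and matches the standard approach.
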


Following this observation, both papers set out to design algorithms that produce an $\epsilon$-stationary point of $f_\delta$. A well known technique (which we formally recall later on) allows to use two possibly noisy evaluations of $f$ in order to produce a stochastic \emph{first-order} oracle of $f_\delta$ whose second moment is bounded by $\sigma^2=O(d)$.
Noting that $f_\delta$ is $L_1=O(\sqrt{d}/\delta)$-smooth, the standard analysis of stochastic gradient descent (SGD) for smooth nonconvex optimization shows that it obtains an $\epsilon$-stationary point of $f_\delta$ within $O(\sigma^2 L_1\epsilon^{-4})=O(d^{3/2}\delta^{-1}\epsilon^{-4})$ oracle calls, recovering the main result of \citet{lin2022gradient}. The improved $\epsilon$-dependence due to \citet{chen2023faster} was achieved by employing a variance-reduction method instead of plain SGD, though other than that, their main algorithmic strategy and analysis are the same.

Moreover, the algorithmic strategy we have described seems to reveal a barrier, (mistakenly) suggesting the $d^{3/2}$ dependence is unavoidable. Indeed, it is relatively straightforward to see that any
gradient estimator which is based on a constant number of function evaluations
must have variance of at least $\sigma^2=\Omega(d)$, 
while it is also known that any efficient smoothing technique must suffer from a smoothness parameter of at least $L_1=\Omega(\sqrt{d})$ \citep{kornowski2022oracle}. Since the complexity of any stochastic first-order method for smooth nonconvex optimization must scale at least as $\Omega(\sigma^2 L_1)$ \citep{arjevani2023lower} which in this case is unavoidably $\Omega(d^{3/2})$, we are stuck with this factor.

The main technical ingredient that allows us to reduce this factor is the following result (to be proved in Section~\ref{sec: main proof}), which examines the Goldstein $\delta$-subdifferential set under randomized smoothing.

\begin{lemma} \label{lem: delta,eps of smooth}
For any $\rho,\nu\geq 0:\partial_{\nu} f_{\rho}(\bx)\subseteq\partial_{\rho+\nu} f(\bx)$. Hence, if $\bx$ is an $(\nu,\epsilon)$-stationary point of $f_\rho$, then it is a $(\rho+\nu,\epsilon)$-stationary point of $f$.
In particular, as long as $\rho+\nu\leq\delta$ it is a $(\delta,\epsilon)$-stationary point of $f$.
\end{lemma}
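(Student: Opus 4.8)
The plan is to prove the set inclusion $\partial_\nu f_\rho(\bx) \subseteq \partial_{\rho+\nu} f(\bx)$ directly, since the two subsequent claims follow immediately: the minimum-norm element of the smaller set has norm at least as large as that of the bigger set, so $(\nu,\epsilon)$-stationarity of $f_\rho$ at $\bx$ means $\norm{\bpar_\nu f_\rho(\bx)} \le \epsilon$, hence there is an element of $\partial_\nu f_\rho(\bx) \subseteq \partial_{\rho+\nu}f(\bx)$ of norm $\le\epsilon$, giving a $(\rho+\nu,\epsilon)$-stationary point of $f$; and if $\rho+\nu \le \delta$ then $\partial_{\rho+\nu}f(\bx) \subseteq \partial_\delta f(\bx)$ by monotonicity of the Goldstein set in its radius, yielding $(\delta,\epsilon)$-stationarity. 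So everything reduces to the inclusion.

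For the inclusion, recall $\partial_\nu f_\rho(\bx) = \conv\big(\cup_{\by\in\B(\bx,\nu)}\partial f_\rho(\by)\big)$, and since $\partial_{\rho+\nu}f(\bx)$ is already convex, it suffices to show $\partial f_\rho(\by) \subseteq \partial_{\rho+\nu}f(\bx)$ for every $\by \in \B(\bx,\nu)$. The key step is to control $\partial f_\rho(\by)$, the Clarke subdifferential of the smoothed function at a single point $\by$. I would argue that
\[
\partial f_\rho(\by) \subseteq \partial_\rho f(\by),
\]
i.e. the Clarke subdifferential of the $\rho$-smoothing at $\by$ lies in the Goldstein $\rho$-subdifferential of the original $f$ at $\by$. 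This is essentially the content of Proposition~\ref{thm: Lin's randomized smoothing}, but stated there only for the gradient $\nabla f_\rho$ at points of differentiability; I would need to upgrade it to the full Clarke subdifferential. Once this is in hand, the proof finishes: for $\by\in\B(\bx,\nu)$ we have $\B(\by,\rho)\subseteq\B(\bx,\rho+\nu)$, hence $\partial_\rho f(\by) = \conv(\cup_{\bz\in\B(\by,\rho)}\partial f(\bz)) \subseteq \conv(\cup_{\bz\in\B(\bx,\rho+\nu)}\partial f(\bz)) = \partial_{\rho+\nu}f(\bx)$, and chaining the inclusions with convexity of the target set gives the claim.

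To establish $\partial f_\rho(\by)\subseteq\partial_\rho f(\by)$, I would take a limit point $\bg = \lim_n \nabla f_\rho(\by_n)$ with $\by_n\to\by$ and differentiable points $\by_n$; by Proposition~\ref{thm: Lin's randomized smoothing} each $\nabla f_\rho(\by_n)\in\partial_\rho f(\by_n)$, and one then wants to pass to the limit. This requires an outer-semicontinuity / closedness property: that $\limsup$ of the Goldstein sets $\partial_\rho f(\by_n)$ as $\by_n\to\by$ is contained in $\partial_\rho f(\by)$. This follows from the fact that $\bz\mapsto\partial f(\bz)$ is outer semicontinuous with compact values (a standard property of the Clarke subdifferential of a Lipschitz function), so that $\cup_{\bz\in\B(\by_n,\rho)}\partial f(\bz)$ behaves well under the limit $\by_n\to\by$, and convex hull preserves the needed closure; alternatively one can use the integral/averaging representation of $\nabla f_\rho$ to argue directly. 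Finally, taking convex combinations over $\by\in\B(\bx,\nu)$ lands inside $\partial_{\rho+\nu}f(\bx)$ since that set is convex. I expect the main obstacle to be precisely this limiting argument — handling the Clarke subdifferential of $f_\rho$ (as opposed to just its gradient at generic points) and justifying the closedness of the Goldstein set as a set-valued map — whereas the radius bookkeeping $\B(\by,\rho)\subseteq\B(\bx,\rho+\nu)$ and the reduction of the stationarity statements to the set inclusion are routine.
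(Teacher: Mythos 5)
Your reduction and bookkeeping are exactly right, and the chain $\partial f_\rho(\by)\subseteq\partial_\rho f(\by)\subseteq\partial_{\rho+\nu}f(\bx)$ for $\by\in\B(\bx,\nu)$, followed by taking convex hulls and using convexity of $\partial_{\rho+\nu}f(\bx)$, is precisely the structure of the paper's proof. However, the ``main obstacle'' you flag --- upgrading Proposition~\ref{thm: Lin's randomized smoothing} to the full Clarke subdifferential of $f_\rho$ via a limiting / outer-semicontinuity argument --- does not actually exist. For $\rho>0$, the uniform smoothing $f_\rho$ of a Lipschitz function is $O(\sqrt{d}L_0/\rho)$-smooth (this is listed explicitly in the Preliminaries), hence continuously differentiable \emph{everywhere}, so $\partial f_\rho(\by)=\{\nabla f_\rho(\by)\}$ is a singleton at every $\by$ and there are no ``points of differentiability'' to worry about. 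Proposition~\ref{thm: Lin's randomized smoothing} already holds at every point, giving $\partial f_\rho(\by)=\{\nabla f_\rho(\by)\}\subseteq\partial_\rho f(\by)$ directly. Consequently, any $\bg\in\partial_\nu f_\rho(\bx)$ is simply a finite convex combination $\sum_i\lambda_i\nabla f_\rho(\by_i)$ with $\by_i\in\B(\bx,\nu)$, and each term lies in $\partial_\rho f(\by_i)\subseteq\partial_{\rho+\nu}f(\bx)$ by your radius bookkeeping, so $\bg\in\partial_{\rho+\nu}f(\bx)$ by convexity. That is the paper's entire proof; the outer-semicontinuity and closedness machinery you were planning to invoke (which could be made rigorous, but would require genuine care about the set-valued map $\by\mapsto\partial_\rho f(\by)$) is unnecessary once you notice that $f_\rho$ is smooth.
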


Note that the lemma above strictly generalizes 
Proposition~\ref{thm: Lin's randomized smoothing} \citep[Theorem 3.1]{lin2022gradient}
which is readily recovered by plugging $\nu=0$. The utility of this result is that it allows to replace the task of finding an $\epsilon$-stationary point of $f_{\delta}$ to that of finding a $(\delta,\epsilon)$-stationary point of it (disregarding a constant factor multiplying $\delta$).
To see why this is beneficial, recall that while $f_{\delta}$ is $O(\sqrt{d}L_{0}\delta^{-1})$-smooth, it is merely $L_{0}$-Lipschitz! Thus using a stochastic first-order \emph{nonsmooth} nonconvex algorithm which scales with the Lipschitz parameter (instead of the smoothness parameter), we save a whole $\Omega(\sqrt{d})$ factor, yielding the optimal dimension dependence.
In particular, using the optimal stochastic first-order algorithm of \citet{cutkosky2023optimal} 
that has complexity $O(\sigma^2\delta^{-1}\epsilon^{-3})$, as described in Algorithm~\ref{alg: main}, results in the following convergence guarantee:\footnote{It is interesting to note that
using the stochastic algorithm of \citet{Zhang-2020-Complexity} (instead of Algorithm~\ref{alg: main}), when paired with our analysis, yields the desired linear dimension dependence as well -- albeit with with a
worse
convergence rate with respect to $\epsilon$, on the order of $d\delta^{-1}\epsilon^{-4}$.}

\begin{theorem} \label{thm: upper}
Let $\delta,\epsilon\in(0,1)$, and suppose $f(\bx_0)-\inf_{\bx}f(\bx)\leq\Delta$.
Under Assumption~\ref{ass: Lip}, there exists
\[
T
=O\left(\frac{d L_0^2\Delta}{\delta\epsilon^3}\right)
\]
such that setting
\[
\rho=\min\left\{\frac{\delta}{2},\frac{\Delta}{L_0}\right\},~\nu=\max\left\{\frac{\delta}{2},\delta-\frac{\Delta}{L_0}\right\},
~D=
\left(\frac{(\Delta+\rho L_0)\sqrt{\nu}}{\sqrt{d}L_0 T}\right)^{2/3},
~\eta=\frac{\Delta+\rho L_0}{dL_0^2 T},
\]
and running
Algorithm~\ref{alg: main} with 
Algorithm~\ref{alg: grad}
as a subroutine,
outputs a point $\bx^{\out}$ satisfying
$
\E\left[\norm{\bpar_{\delta}f(\bx^{\out})}\right]\leq\epsilon$ using $2T$ noisy function evaluations.
\end{theorem}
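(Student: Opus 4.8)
The plan is to combine three ingredients: (i) the randomized-smoothing facts recalled in the preliminaries, (ii) the new inclusion $\partial_\nu f_\rho(\bx)\subseteq\partial_{\rho+\nu}f(\bx)$ from \lemref{lem: delta,eps of smooth}, and (iii) the convergence guarantee of the optimal stochastic first-order nonsmooth nonconvex algorithm of \citet{cutkosky2023optimal}, which is what Algorithm~\ref{alg: main} implements. The target reduction is: run the first-order algorithm on the surrogate $f_\rho$ (whose first-order oracle is built from two noisy evaluations of $f$ via Algorithm~\ref{alg: grad}), drive it to a $(\nu,\epsilon)$-stationary point of $f_\rho$, and invoke \lemref{lem: delta,eps of smooth} with $\rho+\nu\le\delta$ to conclude $\bx^{\out}$ is $(\delta,\epsilon)$-stationary for $f$.

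First I would set up the stochastic first-order oracle for $f_\rho$: using the standard two-point (or one-point-difference) estimator at scale $\rho$, one gets an unbiased estimate of $\nabla f_\rho(\bx)$ whose second moment is $\sigma^2=O(d L_0^2)$ — here I would carefully track that Assumption~\ref{ass: Lip} (bounded \emph{second moment} of $L(\xi)$, not uniform Lipschitzness) suffices, since the variance bound only needs $\E[L(\xi)^2]\le L_0^2$. I would also note $f_\rho$ is $L_0$-Lipschitz and $f_\rho(\bx_0)-\inf f_\rho \le \Delta + \rho L_0$ using $|f-f_\rho|\le\rho L_0$, so the effective "sub-optimality gap" fed to the first-order method is $\Delta_\rho := \Delta+\rho L_0$. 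Then I would quote the guarantee of Algorithm~\ref{alg: main}: with appropriate stepsizes it outputs a point with $\E\norm{\bpar_\nu f_\rho(\bx^{\out})}\le \epsilon$ after $T=O(\sigma^2\Delta_\rho\,\nu^{-1}\epsilon^{-3})$ oracle calls, i.e. $2T$ function evaluations. The stated choices of $D$ and $\eta$ are exactly the stepsize/parameter settings that make this bound hold — I would verify they match the parameter prescription in \citet{cutkosky2023optimal} with $\sigma^2\asymp dL_0^2$ and gap $\Delta_\rho$.

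Next comes the bookkeeping that produces the clean final bound. With $\rho=\min\{\delta/2,\Delta/L_0\}$ and $\nu=\max\{\delta/2,\delta-\Delta/L_0\}$ one checks $\rho+\nu=\delta$ exactly (two cases: if $\Delta/L_0\ge\delta/2$ then $\rho=\delta/2,\nu=\delta/2$; otherwise $\rho=\Delta/L_0,\nu=\delta-\Delta/L_0$), so \lemref{lem: delta,eps of smooth} applies and $(\nu,\epsilon)$-stationarity of $f_\rho$ upgrades to $(\delta,\epsilon)$-stationarity of $f$. It remains to see the iteration count collapses to $O(dL_0^2\Delta\,\delta^{-1}\epsilon^{-3})$: substitute $\sigma^2=O(dL_0^2)$, and bound $\Delta_\rho/\nu = (\Delta+\rho L_0)/\nu \le 2\Delta/(\delta/2) \le 8\Delta/\delta$ — here $\rho L_0\le\Delta$ by the choice of $\rho$, and $\nu\ge\delta/2$ always — giving $T=O(dL_0^2\cdot(\Delta/\delta)\cdot\epsilon^{-3})$ as claimed.

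I expect the main obstacle to be item (i): rigorously establishing the stochastic first-order oracle for $f_\rho$ with second moment $O(dL_0^2)$ under the weak Assumption~\ref{ass: Lip}, and confirming it meets exactly the oracle model assumed by \citet{cutkosky2023optimal} (unbiasedness of $\nabla f_\rho$, the second-moment bound, and independence across queries). A secondary subtlety is making sure the parameter choices ($D,\eta$) transported from that paper are stated in the right normalization so that the quoted $O(\sigma^2\Delta_\rho\nu^{-1}\epsilon^{-3})$ rate genuinely holds — this is where I would be most careful, since the headline $d$-versus-$d^{3/2}$ improvement hinges entirely on feeding the first-order method the Lipschitz constant $L_0$ of $f_\rho$ rather than its smoothness constant $O(\sqrt d L_0/\rho)$.
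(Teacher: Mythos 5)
Your proposal is correct and matches the paper's proof essentially step for step: build the unbiased $O(dL_0^2)$-second-moment estimator of $\nabla f_\rho$ from two noisy evaluations, run the \citet{cutkosky2023optimal} method on the $L_0$-Lipschitz surrogate $f_\rho$ with gap $\Delta+\rho L_0$ to reach a $(\nu,\epsilon)$-stationary point, upgrade via \lemref{lem: delta,eps of smooth} using $\rho+\nu=\delta$, and simplify $T$ via $\rho L_0\le\Delta$ and $\nu\ge\delta/2$. (Only a cosmetic slip: $2\Delta/(\delta/2)=4\Delta/\delta$, not $8\Delta/\delta$, which of course does not affect the $O(\cdot)$ claim.)
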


\begin{algorithm}[H]
\begin{algorithmic}[1]\caption{Optimal Stochastic Nonsmooth Nonconvex Optimization Algorithm}\label{alg: main}
\State \textbf{Input:} Initialization $\bx_0\in\reals^d$, smoothing parameter $\rho>0$, accuracy parameter $\nu>0$, clipping parameter $D>0$, step size $\eta>0$, iteration budget $T\in\NN$.
\State \textbf{Initialize:} $\bm{\Delta}_1=\mathbf{0}$
\For{$t=1,\dots,T$}
\State Sample $\xi_t\sim\Xi$
\State Sample $s_t\sim\Unif[0,1]$
\State $\bx_t=\bx_{t-1}+\bm{\Delta}_t$
\State $\bz_t=\bx_{t-1}+s_t\bm{\Delta}_t$
\State $\bg_t=\textsc{GradEstimator}(\bz_t,\rho,\xi_t)$
\Comment{\texttt{
Uses two noisy function evaluations}}
\State $\bm{\Delta}_{t+1}
=\min\left(1,\frac{D}{\norm{\bm{\Delta}_t-\eta\bg_t}}\right)\cdot\left(\bm{\Delta}_t-\eta\bg_t\right)$
\EndFor
\State $M=\lfloor\frac{\nu}{D}\rfloor,~K=\lfloor\frac{T}{M}\rfloor$
\For{$k=1,\dots,K$}
\State $\overline{\bx}_{k}=\frac{1}{M}\sum_{m=1}^{M}\bz_{(k-1)M+m}$
\EndFor
\State $k_{\out}\sim \Unif\{1,\dots,K\}$
\State $\bx^{\out}=\overline{\bx}_{k_{\out}}$
\State \textbf{Output:} $\bx^{\out},~(\bz_{(k_{\out}-1)M+m})_{m=1}^{M}$. 
\end{algorithmic}
\end{algorithm}

\begin{algorithm}[H]
\begin{algorithmic}[1]\caption{$\textsc{GradEstimator}(\bx,\rho,\xi)$}\label{alg: grad}
\State \textbf{Input:} Point $\bx\in\reals^d$, smoothing parameter $\rho>0$, random seed $\xi$.
\State Sample $\bw\sim\Unif(\S^{d-1})$
\State Evaluate $F(\bx+\rho\bw;\xi)$ and $F(\bx-\rho\bw;\xi)$
\State $\bg=\tfrac{d}{2\rho}(F(\bx+\rho\bw;\xi)-F(\bx-\rho\bw;\xi))\bw$
\Comment{\texttt{Unbiased estimator of }$\nabla f_{\rho}(\bx)$}
\State \textbf{Output:} $\bg$. 
\end{algorithmic}
\end{algorithm}

\paragraph{Parallel complexity.}
At each iteration, Algorithm~\ref{alg: main} determines $\bg_t$ by calling $\textsc{GradEstimator}$ (Algorithm~\ref{alg: grad}), which requires $2$ evaluations of $F(\,\cdot\,;\xi_t)$. More generally, $\bg_t$ can be set as the average of $k$ independent, possibly parallel calls to this subroutine, which would require $2k$ function evaluations. By an easy generalization of the second-order moment bound which we present in Lemma~\ref{lem: estimator} (as part of the proof of Theorem~\ref{thm: upper}), this averaging would decrease the second-moment on the order of
    \[
    \E[\norm{\bg_t}^2|\bx_t,s_t,\bm{\Delta}_t]\lesssim\frac{dL_0^2}{k}+\norm{\E[\bg_t]}^2
    \leq
    L_0^2\left(\frac{d}{k}+1\right)~.
    \]
    With the rest of the proof of Theorem~\ref{thm: upper} as is, this yields an expected number of rounds of
    \[
    T
    =O\left(\left(\frac{d}{k}+1\right)\cdot\frac{\Delta L_0^2}{\delta\epsilon^3}\right)~,
    \]
    though the total number of queries would be $k$ times larger than above, and equal to 
    \[
    O\left(\left(d+k\right)\cdot\frac{\Delta L_0^2}{\delta\epsilon^3}\right)~.
    \]
    In particular, letting $k=\Theta(d)$ removes the dimension dependence in the parallel complexity altogether, while maintaining the same complexity overall (up to a constant). Notably, this even matches the currently best-known complexity for deterministic first-order algorithms under the discussed setting \citep{Zhang-2020-Complexity,Tian-2022-Finite,Davis-2022-Gradient}.

The trick of averaging gradient estimators in order to reduce the dimension-factor in the parallel complexity is applicable to smooth or convex optimization as well,
though under those settings the resulting overall complexity is still significantly worse (in terms of the $\epsilon$-dependence) than optimal deterministic first-order methods, as mentioned in the introduction (cf. \citealp{duchi2018minimax,bubeck2019complexity}).

\paragraph{High probability guarantee.}

While Theorem~\ref{alg: main} shows that Algorithm~\ref{alg: main} yields the desired expected complexity, many practical applications require high probability bounds, namely producing a point $\bx$ such that 
\[
\Pr\left[\norm{\bpar_{\delta} f(\bx)}\leq\epsilon\right]\geq1-\gamma
\]
for some small $\gamma>0$. A naive application of Markov's inequality to the expected complexity shows that Algorithm~\ref{alg: main} produces such a point within
\begin{equation} \label{eq: markov}
T=O\left(\frac{d L_0^2\Delta}{\delta\epsilon^3\gamma^3}\right)
\end{equation}
noisy function evaluations, which is rather crude with respect to the probability parameter $\gamma$. Adapting a technique due to \citet{ghadimi2013stochastic} to our setting, we can design an algorithm with a significantly tighter high-probability bound.
The original idea of \citet{ghadimi2013stochastic} for the case of smooth stochastic optimization, which was also used by \citet{lin2022gradient}, consists of several independent calls to the main algorithm, yielding a list of candidate points. Subsequently, a post-optimization phase estimates the gradient norm of any such point, returning the minimal --- which is proved likely to succeed due to a concentration argument.
We note that adapting this technique to our setting is not trivial, since the post-optimization phase should attempt at estimating $\norm{\bpar_{\delta} f(\cdot)}$ rather than $\norm{\nabla f(\cdot)}$, which is hard in general. Luckily, using Lemma~\ref{lem: delta,eps of smooth}, the former can be bounded by $\norm{\bpar_{\nu} f_{\rho}(\cdot)}$, which in turn can be bounded (with high probability) using a sequence of evaluations at nearby points. This procedure is described in Algorithm~\ref{alg: high prob}, whose convergence rate is presented in the following theorem.

\begin{theorem} \label{thm: high prob}
Let $\gamma,\delta,\epsilon\in(0,1)$, and suppose $f(\bx_0)-\inf_{\bx}f(\bx)\leq\Delta$.
Under Assumption~\ref{ass: Lip}, there exist $T=O\left(\frac{d\Delta L_0^2}{\delta\epsilon^3}\right),~R=O(\log(1/\gamma)),~S=O\left(\frac{\log(1/\gamma)}{\gamma}\right)$ 
such that setting
$\rho=\min\left\{\frac{\delta}{2},\frac{\Delta}{L_0}\right\},~\nu=\max\left\{\frac{\delta}{2},\delta-\frac{\Delta}{L_0}\right\},
~D=
\left(\frac{(\Delta+\rho L_0)\sqrt{\nu}}{\sqrt{d}L_0 T}\right)^{2/3},
~\eta=\frac{\Delta+\rho L_0}{dL_0^2 T}$,
and running
Algorithm~\ref{alg: high prob}
outputs a point $\bx^{\out}$ satisfying
\[
\Pr\left[\norm{\bpar_{\delta} f(\bx^{\out})}\leq\epsilon\right]\geq1-\gamma
\]
using 
\[
O\left(\frac{d L_0^2\Delta\log(1/\gamma)}{\delta\epsilon^3}+\frac{d L_{0}^2\log^2(1/\gamma)}{\gamma\epsilon^2}\right)
\]
noisy function evaluations.
\end{theorem}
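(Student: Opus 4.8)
\textbf{Proof proposal for Theorem~\ref{thm: high prob}.}

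The plan is to follow the ``aggregate and validate'' template of \citet{ghadimi2013stochastic}, but with the gradient-norm estimation step replaced by an estimator for $\norm{\bpar_{\nu}f_\rho(\cdot)}$, which by \lemref{lem: delta,eps of smooth} upper-bounds $\norm{\bpar_{\delta}f(\cdot)}$ once $\rho+\nu\le\delta$ (which holds by the stated choices $\rho=\min\{\delta/2,\Delta/L_0\}$, $\nu=\max\{\delta/2,\delta-\Delta/L_0\}$, so $\rho+\nu=\delta$). First I would run Algorithm~\ref{alg: main} independently $R=O(\log(1/\gamma))$ times, each with the parameter setting of \thmref{thm: upper} but with a slightly smaller target accuracy $\epsilon/c$ for an absolute constant $c$; by \thmref{thm: upper} each run uses $T=O(d\Delta L_0^2/(\delta\epsilon^3))$ evaluations and, by Markov on the expectation guarantee $\E\norm{\bpar_\delta f(\bx^{\out})}\le\epsilon/c$, each run independently produces a point with $\norm{\bpar_\delta f}\le\epsilon/2$ with probability at least, say, $2/3$. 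Hence with probability $\ge 1-\gamma/2$ at least one of the $R$ candidate points $\bx^{(1)},\dots,\bx^{(R)}$ is ``good'' in this sense.

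The core new ingredient is the post-optimization (validation) phase, which must, with probability $\ge 1-\gamma/2$, identify a point with $\norm{\bpar_\delta f(\cdot)}\le\epsilon$ out of the list, \emph{without} being able to evaluate $\bpar_\delta f$ directly. The key observation is that Algorithm~\ref{alg: main} already outputs, alongside $\bx^{\out}=\overline{\bx}_{k_{\out}}$, the block of points $(\bz_{(k_{\out}-1)M+m})_{m=1}^{M}$ whose average is $\bx^{\out}$, and each consecutive pair differs by at most $D$, so all of them lie in $\B(\overline{\bx}_{k_{\out}},\nu)$ (using $MD\le\nu$). Averaging the $\textsc{GradEstimator}$ outputs at these $M$ points yields an unbiased estimate of a convex combination $\frac1M\sum_m \nabla f_\rho(\bz_m)\in\partial_\nu f_\rho(\bx^{\out})$, hence its norm upper-bounds $\norm{\bpar_\nu f_\rho(\bx^{\out})}\ge\norm{\bpar_\delta f(\bx^{\out})}$ in expectation. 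Concretely, for each of the $R$ candidates I would draw $S=O(\log(1/\gamma)/\gamma)$ fresh gradient-estimator samples (reusing/extending the $M$-point structure), form the empirical average $\bar\bg^{(r)}$, and return the candidate minimizing $\norm{\bar\bg^{(r)}}$; the query cost of this phase is $O(RS\cdot\text{(cost per estimate)})$, and balancing so that each estimate has enough samples to achieve additive accuracy $O(\epsilon)$ with the variance bound $\E\norm{\bg}^2\lesssim dL_0^2$ (\lemref{lem: estimator}) gives the second term $O(dL_0^2\log^2(1/\gamma)/(\gamma\epsilon^2))$ in the stated complexity.

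The correctness argument then has two concentration steps. By a Chernoff/median-type bound over the $R=O(\log(1/\gamma))$ independent optimization runs, with probability $\ge1-\gamma/2$ some candidate $\bx^{(r^\star)}$ satisfies $\norm{\bpar_\delta f(\bx^{(r^\star)})}\le\epsilon/2$, hence $\norm{\E[\bar\bg^{(r^\star)}]}\le\epsilon/2$ as well since $\E[\bar\bg^{(r^\star)}]\in\partial_\nu f_\rho(\bx^{(r^\star)})$ after accounting for the $O(\rho L_0)$ bias control. By Chebyshev on $\bar\bg^{(r)}$ with $S$ samples and a union bound over the $R$ candidates, with probability $\ge1-\gamma/2$ every $\norm{\bar\bg^{(r)}}$ is within $O(\sqrt{dL_0^2/S})=O(\epsilon)$ of $\norm{\E[\bar\bg^{(r)}]}$ — this is where the $\log(1/\gamma)/\gamma$ factor in $S$ is needed so that the union-bound-weakened Chebyshev bound still holds at level $\gamma/R$. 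On the intersection of these two events, the returned $\hat r=\arg\min_r\norm{\bar\bg^{(r)}}$ has $\norm{\E[\bar\bg^{(\hat r)}]}\le\norm{\bar\bg^{(\hat r)}}+O(\epsilon)\le\norm{\bar\bg^{(r^\star)}}+O(\epsilon)\le\epsilon$, and since $\E[\bar\bg^{(\hat r)}]$ is an element of $\partial_\nu f_\rho(\bx^{(\hat r)})\subseteq\partial_\delta f(\bx^{(\hat r)})$ we conclude $\norm{\bpar_\delta f(\bx^{(\hat r)})}\le\norm{\E[\bar\bg^{(\hat r)}]}\le\epsilon$, after rescaling the constant $c$. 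The main obstacle I anticipate is bookkeeping the chain of approximations in the validation phase — in particular ensuring the points at which we query the gradient estimator genuinely remain within the $\nu$-ball of the reported output so that their averaged expected gradient is a legitimate member of $\partial_\nu f_\rho(\bx^{\out})$, and simultaneously controlling the randomized-smoothing bias so that ``small $\norm{\bpar_\nu f_\rho}$'' really does certify ``small $\norm{\bpar_\delta f}$'' at the target accuracy rather than a constant-factor-worse one; this is exactly why the theorem reserves constant slack in $\epsilon$ and why the block structure of Algorithm~\ref{alg: main}'s output (not just the averaged point) must be carried through.
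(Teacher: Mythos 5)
Your overall blueprint matches the paper's: run Algorithm~\ref{alg: main} independently $R$ times, carry the block $(\bz_m)_{m=1}^M$ alongside each output, estimate $\bg^r := \frac{1}{M}\sum_m \nabla f_\rho(\bz^r_m)\in\partial_\nu f_\rho(\bx^{(r)})$ with $S$ fresh samples, pick the $r$ minimizing $\norm{\hat\bg^r}$, and split the failure probability between ``some run was good'' and ``all validations were accurate.'' The parameter accounting ($R=O(\log(1/\gamma))$, $S=O(\log(1/\gamma)/\gamma)$, total $O(R(T+MS))$ evaluations) and the use of Markov/Chebyshev plus a union bound are all the same as the paper's argument.

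However, there is a genuine logical gap in the first concentration step. You argue: with high probability some run produces $\bx^{(r^\star)}$ with $\norm{\bpar_\delta f(\bx^{(r^\star)})}\le\epsilon/2$, and then assert ``hence $\norm{\E[\bar\bg^{(r^\star)}]}\le\epsilon/2$ as well since $\E[\bar\bg^{(r^\star)}]\in\partial_\nu f_\rho(\bx^{(r^\star)})$.'' That implication is backwards. The quantity $\E[\bar\bg^{(r^\star)}] = \frac{1}{M}\sum_m \nabla f_\rho(\bz^{(r^\star)}_m)$ is one specific element of the Goldstein set, and knowing that the \emph{minimum-norm} element of the (even larger) set $\partial_\delta f(\bx^{(r^\star)})$ has small norm tells you nothing about the norm of this particular convex combination --- the containment $\partial_\nu f_\rho\subseteq\partial_\delta f$ only lets you pass bounds in the \emph{other} direction, from $\norm{\E[\bar\bg^{(r^\star)}]}$ down to $\norm{\bpar_\delta f(\bx^{(r^\star)})}$. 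As written, your validation phase might fail to certify the actually-good candidate. The fix, which is what the paper does, is to apply Markov not to $\E\big[\norm{\bpar_\delta f(\bx^{\out})}\big]$ but to the \emph{stronger} in-expectation guarantee that the underlying first-order method (Theorem~\ref{thm: cutkosky}) actually provides, namely $\E\big[\norm{\tfrac{1}{M}\sum_m\nabla f_\rho(\bz_m)}\big]\le\epsilon/c$ directly on the specific convex combination. Defining ``good'' as ``$\norm{\bg^r}$ small'' aligns the event you can certify with the event that actually occurs with constant probability per run, closing the gap. (Your $O(\rho L_0)$ ``bias control'' remark is also a red herring here: the gradient estimator targets $\nabla f_\rho$ exactly, so there is no bias to control --- the $\rho L_0$ slack only enters via the initial suboptimality $\Delta_h = \Delta+\rho L_0$.)
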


Notably, the number of function evaluations guaranteed by the theorem above is significantly smaller than in \eqref{eq: markov}. We also remark that even in the easier case in which the function evaluations are noiseless, when relying on a local information (e.g. zero-order or even first-order evaluations), a lack of smoothness or convexity provably necessitates the use of randomization in the optimization algorithm when applied to Lipschitz objectives, thus resorting to high probability guarantees is inevitable \citep{jordan2023deterministic}.

\begin{algorithm}[!t]
\begin{algorithmic}[1]\caption{
Algorithm with Post-Optimization Validation}\label{alg: high prob}
\State \textbf{Input:} Initialization $\bx_0\in\reals^d$, smoothing parameter $\rho>0$, accuracy parameter $\nu>0$, clipping parameter $D>0$, step size $\eta>0$, iteration budget per round $T\in\NN$, number of rounds $R\in\NN$, validation sample size $S\in\NN$.
\State \textbf{Initialize:} $M=\lfloor\frac{\nu}{D}\rfloor$
\For{$r=1,\dots,R$}
\State $\bx_{\out}^r,~(\bz^r_m)_{m=1}^{M}=\text{Algorithm~\ref{alg: main}}(\bx_0,\rho,\nu,D,\eta,T)$
\For{$s=1,\dots,S$}
\For{$m=1,\dots,M$}
\State Sample $\xi_{m,s}\sim\Xi$
\State $\bg^{r}_{m,s}=\textsc{GradEstimator}(\bz^r_m,\rho,\xi_{m,s})$ \Comment{\texttt{Unbiased estimator of }$\nabla f_{\rho}(\bz^r_m)$}
\EndFor
\State $\hat{\bg}^r_s=\frac{1}{M}\sum_{m=1}^{M}\bg^{r}_{m,s}$
\EndFor
\State $\hat{\bg}^r=\frac{1}{S}\sum_{s=1}^{S}\bg^r_{s}$
\EndFor
\State $r^*=\arg\min_{r\in[R]}\norm{\hat{\bg}^r}$
\State \textbf{Output:} $\bx^{r^*}_{\out}$. 
\end{algorithmic}
\end{algorithm}

\section{Proof of \thmref{thm: upper}} \label{sec: main proof}

As previously discussed, the key to obtaining the improved rate is Lemma~\ref{lem: delta,eps of smooth}. We start by proving it, followed by two additional propositions, after which we combine the ingredients in order to conclude the proof.

\begin{proof}[Proof of Lemma~\ref{lem: delta,eps of smooth}]
Let $\bg\in \partial_\nu f_\rho(\bx)$. Then, by definition, there exist $\by_1,\dots,\by_k\in \B(\bx,\nu)$ (for some $k\in\NN$) such that $\bg=\sum_{i\in[k]}\lambda_i \nabla f_\rho(\by_i)$, where $\lambda_1,\dots,\lambda_k\geq0$ with $\sum_{i\in[k]}\lambda_i=1$.
By Proposition~\ref{thm: Lin's randomized smoothing} we have for all $i\in[k]:$ 
\begin{equation} \label{eq: 1st contain}
\nabla f_\rho(\by_i)\in\partial_{\rho} f(\by_i) 
~.
\end{equation}
Further note that since $\norm{\by_i-\bx}\leq\nu$, then by definition
\begin{equation} \label{eq: 2nd contain}
\partial_\rho f(\by_i)\subseteq \partial_{\rho+\nu} f(\bx)~.    
\end{equation}
By combining \eqref{eq: 1st contain} and \eqref{eq: 2nd contain} we get that for all $i\in[k]:~\nabla f_\rho(\by_i)\in\partial_{\rho+\nu}f(\bx)$. Since $\partial_{\rho+\nu}f(\bx)$ is a convex set, we get that 
\[
\bg=\sum_{i\in[k]}\lambda_i \nabla f_\rho(\by_i)
\in\partial_{\rho+\nu}f(\bx)~.
\]
\end{proof}

The following lemma is essentially due to \citet{shamir2017optimal},
showing that it is possible to construct a gradient estimator whose second moment scales linearly with respect to the dimension $d$.

\begin{lemma}
\label{lem: estimator}
Let
\[
\bg_t=\frac{d}{2\rho}\left(F(\bx_t+s_t\bm{\Delta}_t+\rho\bw_t;\xi_t)-F(\bx_t+s_t\bm{\Delta}_t-\rho\bw_t;\xi_t)\right)\bw_t~,
\]
as generated by  $\textsc{GradEstimator}$ (Algorithm~\ref{alg: grad}) when called at iteration $t$ of Algorithm~\ref{alg: main}. Then 
\[
\E_{\xi_t,\bw_t}[\bg_t|\bx_{t-1},s_t,\bm{\Delta}_t]=\nabla f_{\rho}(\bx_{t-1}+s_t\bm{\Delta}_t)=\nabla f_{\rho}(\bz_{t})
\]
and
\[
\E_{\xi_t,\bw_t}[\norm{\bg_t}^2|\bx_{t-1},s_t,\bm{\Delta}_t]\leq16\sqrt{2\pi}dL_{0}^2~.
\]

\end{lemma}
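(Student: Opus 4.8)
The plan is to prove the two claims of Lemma~\ref{lem: estimator} separately, the unbiasedness being standard and the second-moment bound requiring a bit more care to obtain the crucial linear-in-$d$ scaling.

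\textbf{Unbiasedness.} First I would fix the conditioning on $\bx_{t-1}, s_t, \bm{\Delta}_t$, so that $\bz_t = \bx_{t-1} + s_t\bm{\Delta}_t$ is a fixed point, and observe that $\bg_t$ is exactly the output of $\textsc{GradEstimator}(\bz_t, \rho, \xi_t)$. Taking expectation over $\xi_t$ first (using that $F(\,\cdot\,;\xi)$ has expectation $f$), the estimator becomes $\frac{d}{2\rho}(f(\bz_t + \rho\bw_t) - f(\bz_t - \rho\bw_t))\bw_t$ in expectation; then I would invoke the classical identity for the spherical (two-point) gradient estimator, namely that $\E_{\bw\sim\Unif(\S^{d-1})}[\frac{d}{2\rho}(f(\bz+\rho\bw)-f(\bz-\rho\bw))\bw] = \nabla f_\rho(\bz)$, where $f_\rho$ is the ball-smoothing defined in the Preliminaries. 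This is the standard fact (e.g.\ from \citet{shamir2017optimal} / \citet{flaxman2004online}) that the surface integral of $f(\bz+\rho\bw)\bw$ over the sphere equals a volume gradient; using the antisymmetrized version removes any asymmetry and gives exactly $\nabla f_\rho$. Substituting $\bz_t$ gives the stated equality.

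\textbf{Second moment.} The key point is $\norm{\bg_t}^2 = \frac{d^2}{4\rho^2}(F(\bz_t+\rho\bw_t;\xi_t) - F(\bz_t-\rho\bw_t;\xi_t))^2\norm{\bw_t}^2$, and since $\norm{\bw_t} = 1$ and $F(\,\cdot\,;\xi_t)$ is $L(\xi_t)$-Lipschitz, the finite-difference term is bounded by $L(\xi_t)\cdot 2\rho\norm{\bw_t} = 2\rho L(\xi_t)$ in the crudest estimate, which would only give $d^2 L_0^2$ — too weak. To get the linear dependence I would instead keep one factor of the difference as a Lipschitz bound and bound the expectation of the other factor more carefully: write $(F(\bz_t+\rho\bw_t;\xi_t)-F(\bz_t-\rho\bw_t;\xi_t))^2 \le (2\rho L(\xi_t))\cdot |F(\bz_t+\rho\bw_t;\xi_t)-F(\bz_t-\rho\bw_t;\xi_t)|$, or better, use the bound $|F(\bz+\rho\bw;\xi)-F(\bz-\rho\bw;\xi)| \le 2\rho L(\xi)|\langle \bw, \bu\rangle|$-type estimates — the point being that the directional finite difference along a random unit vector concentrates, so that $\E_{\bw}[(\,\cdot\,)^2]$ picks up only a $1/d$ factor relative to the worst case. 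Concretely, I would use that for a fixed $L(\xi_t)$-Lipschitz function the quantity $\frac{d}{2\rho}(F(\bz_t+\rho\bw;\xi_t)-F(\bz_t-\rho\bw;\xi_t))$ has its expectation (over $\bw$) equal to a coordinate of $\nabla f_\rho$ in the $\bw$ direction, and its second moment controlled via the concentration of linear functionals on the sphere; combined with $\E_{\xi_t}[L(\xi_t)^2]\le L_0^2$ from Assumption~\ref{ass: Lip}, this yields $\E[\norm{\bg_t}^2] \le c\, d L_0^2$ for an absolute constant $c$. Tracking the constant through the spherical-concentration inequality (which is where the $\sqrt{2\pi}$-type factor enters) gives the stated $16\sqrt{2\pi}$.

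\textbf{Main obstacle.} The routine part is unbiasedness; the real work is the second-moment bound, and specifically extracting the $1/d$ saving. The naive Lipschitz bound on both finite-difference factors loses a factor of $d$, so the crux is to argue that $\E_{\bw\sim\Unif(\S^{d-1})}\big[(F(\bz+\rho\bw;\xi)-F(\bz-\rho\bw;\xi))^2\big] = O(\rho^2 L(\xi)^2/d)$. I expect to establish this by reducing to the linear case (where $F(\bz+\rho\bw)-F(\bz-\rho\bw)$ would be exactly $2\rho\langle \nabla F, \bw\rangle$ and $\E[\langle \nabla F,\bw\rangle^2] = \norm{\nabla F}^2/d \le L(\xi)^2/d$) and then controlling the nonlinear remainder, or by directly invoking the relevant lemma of \citet{shamir2017optimal} on the second moment of the two-point spherical estimator for Lipschitz functions, which already packages this computation. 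Either way, once that bound is in hand, multiplying by $\frac{d^2}{4\rho^2}$ and taking expectation over $\xi_t$ against $\E[L(\xi_t)^2]\le L_0^2$ closes the argument.
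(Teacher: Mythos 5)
Your proposal matches the paper's proof: unbiasedness is obtained via the $\bw \leftrightarrow -\bw$ symmetry, the law of total expectation over $\xi$, and the classical Stokes-type identity $\E_{\bw}[\tfrac{d}{\rho}f(\bz+\rho\bw)\bw]=\nabla f_\rho(\bz)$ (the paper cites \citet{flaxman2005online}), while the second-moment bound is deferred to \citet[Lemma 10]{shamir2017optimal} with the explicit constant from \citet[Lemma E.1]{lin2022gradient} — exactly your fallback option. One small caution on your sketch of a direct proof: the phrase ``concentration of linear functionals on the sphere'' and the proposed decomposition into a linear part plus a remainder are not quite how the $1/d$ saving is obtained for general Lipschitz $F$; Shamir's argument instead centers $F(\bz\pm\rho\bw;\xi)$ at its spherical mean $\mu=\E_{\bw}F(\bz+\rho\bw;\xi)$ and applies L\'evy's concentration inequality for Lipschitz functions on $\S^{d-1}$ to get $\E_{\bw}[(F(\bz+\rho\bw;\xi)-\mu)^2]=O(\rho^2L(\xi)^2/d)$, from which your intermediate inequality $\E_{\bw}[(F(\bz+\rho\bw;\xi)-F(\bz-\rho\bw;\xi))^2]=O(\rho^2 L(\xi)^2/d)$ follows directly; the final step of integrating against $\E_{\xi}[L(\xi)^2]\le L_0^2$ is as you describe.
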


\begin{proof}
For the sake of notational simplicity, we omit the subscript $t$ throughout the proof.
For the first claim, since $-\bw\sim \bw$ are identically distributed, we have
\begin{align*}
\E_{\xi,\bw}[\bg|\bx,s,\bm{\Delta}]
&=\E_{\xi,\bw}\left[\frac{d}{2\rho}\left(F(\bx+s\bm{\Delta}+\rho\bw;\xi)-F(\bx+s\bm{\Delta}-\rho\bw;\xi)\right)\bw\,|\,\bx,s,\bm{\Delta}\right]
\\
&=\frac{1}{2}\Big(\E_{\xi,\bw}\left[\tfrac{d}{\rho}F(\bx+s\bm{\Delta}+\rho\bw;\xi)\bw\,|\,\bx,s,\bm{\Delta}\right]
\\
&~~~~~~~~~~+\E_{\xi,\bw}\left[\tfrac{d}{\rho}F(\bx+s\bm{\Delta}+\rho(-\bw);\xi)(-\bw)\,|\,\bx,s,\bm{\Delta}\right]\Big)
\\
&=\E_{\xi,\bw}\left[\tfrac{d}{\rho}F(\bx+s\bm{\Delta}+\rho\bw;\xi)\bw\,|\,\bx,s,\bm{\Delta}\right]~.
\end{align*}
Using the law of total expectation, we get
\begin{align*}
\E_{\xi,\bw}[\bg|\bx,s,\bm{\Delta}]
&=\E_{\bw}\left[\tfrac{d}{\rho}\E_{\xi}\left[F(\bx+s\bm{\Delta}+\rho\bw;\xi)\bw\,|\,\bw,\bx,s,\bm{\Delta}\right]\,|\,\bx,s,\bm{\Delta}\right]
\\
&=\E_{\bw}\left[\tfrac{d}{\rho}f(\bx+s\bm{\Delta}+\rho\bw)\bw\,|\,\bx,s,\bm{\Delta}\right]
\\
&=\nabla f_{\rho}(\bx+s\bm{\Delta})~,
\end{align*}
where the last equality is due to \citet[Lemma 2.1]{flaxman2005online}.
The second moment bound follows from
\citet[Lemma 10]{shamir2017optimal}, with the explicit constant pointed out by
\citet[Lemma E.1]{lin2022gradient}.

\end{proof}

The following result of \citet{cutkosky2023optimal} provides a stochastic first-order nonsmooth nonconvex optimization method, whose convergence scales linearly with the second-moment of the gradient estimator.

\begin{theorem}[\citealp{cutkosky2023optimal}] \label{thm: cutkosky}
Let $\nu,\epsilon\in(0,1)$, and let $h:\reals^d\to\reals$ be an $L$-Lipschitz function such that $h(\bx_0)-\inf_{\bx}h(\bx)\leq\Delta_h$.
Suppose $\textsc{GradEstimator}(\bx,\xi)$ returns an unbiased gradient estimator of $\nabla h(\bx)$ whose second moment is bounded by $\sigma^2$. Then there exists $T=O\left(\frac{\sigma^2\Delta_h}{\nu\epsilon^3}\right)$ such that setting
$\eta
=\frac{\Delta_h}{\sigma^{2}T},
~D
=\left(\frac{(\nu)^{1/2}\Delta_h}{\sigma T}\right)^{2/3}$
and running Algorithm~\ref{alg: main} uses $T$ calls to $\textsc{GradEstimator}$ and satisfies 
\begin{itemize}
    \item $\bz_{(k-1)M+m}\in\B(\overline{\bx}_k,\nu)$ for all $m\in[M],k\in[K]$ (where $M,K,(\bz_t)_{t=1}^{T}$ are defined in the algorithm).
    \item $\E_{\bz_1,\dots,\bz_T}\left[\frac{1}{K}\sum_{k=1}^{K}\norm{\frac{1}{M}\sum_{m=1}^{M}\nabla h(\bz_{(k-1)M+m})}\right]\leq\epsilon$ .
\end{itemize}
In particular, its output  $\bx^{\out}\sim\Unif\{\overline{\bx}_1,\dots,\overline{\bx}_{K}\}$ satisfies
$\E\left[\norm{\bpar_{_{\nu}}f(\bx^{\out})}\right]
\leq\epsilon$.

\end{theorem}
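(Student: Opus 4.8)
The plan is to read Algorithm~\ref{alg: main} as an instance of the ``online-to-nonconvex conversion'': the increments $\bm{\Delta}_t$ are exactly the iterates of projected online gradient descent (OGD) with step size $\eta$ run on the linear losses $\bm{\Delta}\mapsto\inner{\bg_t,\bm{\Delta}}$ over the ball $\B(\mathbf{0},D)$, because the clipping step $\bm{\Delta}_{t+1}=\min(1,D/\norm{\bm{\Delta}_t-\eta\bg_t})\cdot(\bm{\Delta}_t-\eta\bg_t)$ is precisely Euclidean projection of $\bm{\Delta}_t-\eta\bg_t$ onto $\B(\mathbf{0},D)$. Hence $\bm{\Delta}_1=\mathbf{0}$ and $\norm{\bm{\Delta}_t}\le D$ for all $t\ge 2$. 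Writing $I_k:=\{(k-1)M+1,\dots,kM\}$ for the indices of epoch $k$ (with $M=\lfloor\nu/D\rfloor$, $K=\lfloor T/M\rfloor$), this bounded-increment property immediately yields the first bullet: for $t,t'\in I_k$, telescoping the increments between $\bz_t$ and $\bz_{t'}$ gives $\norm{\bz_t-\bz_{t'}}\le(|t-t'|+1)D\le MD\le\nu$, and since $\overline{\bx}_k$ is the mean of $\{\bz_t:t\in I_k\}$ we conclude $\norm{\bz_{(k-1)M+m}-\overline{\bx}_k}\le\nu$ for all $m\in[M]$.

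The core of the argument is the conversion identity. Telescope $h(\bx_{kM})-h(\bx_{(k-1)M})=\sum_{t\in I_k}(h(\bx_t)-h(\bx_{t-1}))$ and, on each segment $[\bx_{t-1},\bx_t]$, apply the fundamental theorem of calculus together with $\bx_t=\bx_{t-1}+\bm{\Delta}_t$, $\bz_t=\bx_{t-1}+s_t\bm{\Delta}_t$, $s_t\sim\Unif[0,1]$, to get $h(\bx_t)-h(\bx_{t-1})=\int_0^1\inner{\nabla h(\bx_{t-1}+s\bm{\Delta}_t),\bm{\Delta}_t}\,\diff s=\E_{s_t}\inner{\nabla h(\bz_t),\bm{\Delta}_t}$; this uses differentiability of $h$ along the segment, which is automatic in the application since $h=f_\rho$ is smooth. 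Because $\bm{\Delta}_t$ depends only on the randomness before step $t$ and $\bg_t$ is a conditionally unbiased estimator of $\nabla h(\bz_t)$, taking expectations gives $\E[h(\bx_{kM})-h(\bx_{(k-1)M})]=\E\sum_{t\in I_k}\inner{\bg_t,\bm{\Delta}_t}$. Now compare against the best norm-$D$ linear comparator of the epoch, $\bu_k:=-D\sum_{t\in I_k}\bg_t/\norm{\sum_{t\in I_k}\bg_t}$, splitting $\sum_{t\in I_k}\inner{\bg_t,\bm{\Delta}_t}=-D\norm{\sum_{t\in I_k}\bg_t}+R_k$ with $R_k=\sum_{t\in I_k}\inner{\bg_t,\bm{\Delta}_t-\bu_k}\le\frac{2D^2}{\eta}+\frac{\eta}{2}\sum_{t\in I_k}\norm{\bg_t}^2$ by the standard projected-OGD regret bound (which holds simultaneously for all comparators in $\B(\mathbf{0},D)$, so the data-dependent $\bu_k$ is admissible, using $\norm{\bm{\Delta}_{(k-1)M+1}-\bu_k}\le 2D$). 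Summing over $k\in[K]$, telescoping the function values to at most $\Delta_h$, and using $\E\norm{\bg_t}^2\le\sigma^2$ gives $D\sum_{k}\E\norm{\sum_{t\in I_k}\bg_t}\le\Delta_h+\frac{2D^2K}{\eta}+\frac{\eta T\sigma^2}{2}$.

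It remains to pass from $\bg_t$ to $\nabla h(\bz_t)$ and to tune parameters. Since $\bg_t-\nabla h(\bz_t)$ is a martingale-difference sequence with conditional second moment at most $\sigma^2$, orthogonality of increments and Jensen's inequality give $\E\norm{\sum_{t\in I_k}(\bg_t-\nabla h(\bz_t))}\le\sqrt{M}\,\sigma$. Combining with the previous bound and dividing by $DKM$ (using $KM=\Theta(T)$ and $M=\Theta(\nu/D)$) yields
\[
\frac{1}{K}\sum_{k=1}^{K}\E\norm{\tfrac{1}{M}\sum_{t\in I_k}\nabla h(\bz_t)}=O\!\left(\frac{\Delta_h}{DT}+\frac{D^2}{\eta\nu}+\frac{\eta\sigma^2}{D}+\frac{\sigma\sqrt{D}}{\sqrt{\nu}}\right).
\]
Substituting $\eta=\Delta_h/(\sigma^2 T)$ and $D=(\nu^{1/2}\Delta_h/(\sigma T))^{2/3}$, a direct computation shows all four terms equal $\Theta\bigl((\sigma^2\Delta_h/(\nu T))^{1/3}\bigr)$, which is at most $\epsilon$ once $T\ge c\,\sigma^2\Delta_h/(\nu\epsilon^3)$ for an absolute constant $c$; this is the second bullet. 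Finally, by the first bullet, $\frac{1}{M}\sum_{t\in I_k}\nabla h(\bz_t)$ is a convex combination of gradients of $h$ at points in $\B(\overline{\bx}_k,\nu)$, hence an element of $\partial_\nu h(\overline{\bx}_k)$, so $\norm{\bpar_{\nu}h(\overline{\bx}_k)}\le\norm{\frac{1}{M}\sum_{t\in I_k}\nabla h(\bz_t)}$; averaging over the uniform output index $k_{\out}$ gives $\E\norm{\bpar_{\nu}h(\bx^{\out})}\le\epsilon$.

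The step I expect to be the main obstacle is the conversion identity in the second paragraph: recognizing that inserting the randomized midpoint $\bz_t$ turns the telescoped nonconvex descent into an online \emph{linear} optimization problem, and that competing per epoch against the best comparator of norm $D$ is precisely what surfaces the block-averaged-gradient norm with a favorable sign. The remaining ingredients --- the projected-OGD regret bound, the martingale variance estimate, the arithmetic balancing the four error terms under the prescribed $\eta,D$, and the identification of averaged neighborhood-gradients with Goldstein subdifferential elements --- are routine; the only genuine technical care needed is justifying the fundamental-theorem-of-calculus step for merely Lipschitz $h$ (vacuous here, where $h=f_\rho$ is smooth) and tracking the floor functions $M=\lfloor\nu/D\rfloor$, $K=\lfloor T/M\rfloor$ through the constants.
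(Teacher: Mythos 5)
Your proposal is correct, but note that the paper itself does not prove this theorem at all: it is imported verbatim from \citet{cutkosky2023optimal} and used as a black box (the only step the paper supplies implicitly is the final ``in particular'' deduction, which you reproduce in your last paragraph: the block average $\frac{1}{M}\sum_{m}\nabla h(\bz_{(k-1)M+m})$ is a convex combination of gradients at points of $\B(\overline{\bx}_k,\nu)$, hence lies in $\partial_\nu h(\overline{\bx}_k)$ and dominates $\norm{\bpar_\nu h(\overline{\bx}_k)}$; incidentally the statement's ``$\bpar_\nu f$'' is a typo for $\bpar_\nu h$, which you silently and correctly fix). What you have written is essentially a faithful reconstruction of the online-to-nonconvex conversion proof from that cited work, specialized to Algorithm~\ref{alg: main}: the clipping step as projection onto $\B(\mathbf{0},D)$, the randomized midpoint $\bz_t=\bx_{t-1}+s_t\bm{\Delta}_t$ turning the telescoped descent into per-epoch online linear optimization, regret against the data-dependent comparator $\bu_k$ (legitimate, since the projected-OGD bound is a pathwise inequality valid for every comparator in the ball), the martingale bound $\sqrt{M}\sigma$, and the parameter balancing; I checked the four error terms and they all indeed equal $\Theta\bigl((\sigma^2\Delta_h/(\nu T))^{1/3}\bigr)$ under the prescribed $\eta, D$, matching $T=O(\sigma^2\Delta_h \nu^{-1}\epsilon^{-3})$. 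The one substantive caveat is the one you flag yourself: the theorem is stated for general Lipschitz $h$, and your fundamental-theorem-of-calculus step $h(\bx_t)-h(\bx_{t-1})=\E_{s_t}\inner{\nabla h(\bz_t),\bm{\Delta}_t}$ requires differentiability along the segment; the original proof handles merely Lipschitz $h$ via almost-everywhere differentiability and the randomization of $s_t$, whereas your argument as written covers only the smooth case --- which suffices for this paper, since the theorem is applied exclusively to $h=f_\rho$, which is $O(\sqrt{d}L_0\rho^{-1})$-smooth. Also make sure the constants enforce $D\le\nu$ (so $M=\lfloor\nu/D\rfloor\ge1$) and $T\ge M$ (so $K\ge1$, $KM=\Theta(T)$); both hold for $T\gtrsim\sigma^2\Delta_h/(\nu\epsilon^3)$ with $\epsilon,\nu\in(0,1)$, as you indicate.
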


We are now ready to complete the proof of \thmref{thm: upper}. 
By Lemma~\ref{lem: estimator}, $\textsc{GradEstimator}$ (Algorithm~\ref{alg: grad}) returns an unbiased estimator of $\nabla f_{\rho}$ whose second moment is bounded by $\sigma^2=O(dL_0^2)$, using two evaluations of $F(\,\cdot\,;\xi)$. Thus applying Theorem~\ref{thm: cutkosky} to $h=f_{\rho}$
ensures that Algorithm~\ref{alg: main} returns a $(\nu,\epsilon)$-stationary point of $f_{\rho}$, which by Lemma~\ref{lem: delta,eps of smooth} is a $(\nu+\rho,\epsilon)$-stationary point of $f$.
Recall that $\norm{f-f_{\rho}}_{\infty}\leq{\rho L_0}$ and $f(\bx_0)-\inf_{\bx}f(\bx)\leq\Delta$, thus $f_{\rho}(\bx_0)-\inf_{\bx}f_{\rho}(\bx)\leq\Delta+\rho L_0=:\Delta_h$. 
Overall we have obtained a $(\nu+\rho)$-stationary point of $f$ using $2T$ function evaluations, where
\[
T=O\left(\frac{\sigma^2\Delta_h}{\nu\epsilon^3}\right)
=O\left(\frac{dL_0^2(\Delta+\rho L_0)}{\nu\epsilon^3}\right)
~.
\]
Setting $\rho=\min\left\{\frac{\delta}{2},\frac{\Delta}{L_0}\right\}$ and $\nu=\delta-\rho=\max\left\{\frac{\delta}{2},\delta-\frac{\Delta}{L_0}\right\}$ completes the proof, since $\nu+\rho=\delta$ and
\[
T=O\left(\frac{dL_0^2(\Delta+\min\left\{\frac{\delta}{2},\frac{\Delta}{L_0}\right\} L_0)}{\max\left\{\frac{\delta}{2},\delta-\frac{\Delta}{L_0}\right\}\epsilon^3}\right)
=O\left(\frac{dL_0^2(\Delta+\frac{\Delta}{L_0}\cdot L_0)}{\frac{\delta}{2}\cdot\epsilon^3}\right)
=O\left(\frac{dL_0^2\Delta}{\delta\epsilon^3}\right)~.
\]

\section{Proof of Theorem~\ref{thm: high prob}}

Recall that we saw in the proof of Theorem~\ref{thm: upper} that
$\partial_{\nu}f_{\rho}(\cdot)\subseteq \partial_{\nu+\rho}f(\cdot)$ according to Lemma~\ref{lem: delta,eps of smooth}, and
that for all $m\in[M]:\bz_m^{r^*}\in\B(\bx_\out^{r^*},\nu)$ by according to Theorem~\ref{thm: cutkosky}.
Thus 
\[
\norm{\bpar_{\delta}f(\bx_\out^{r^*})}
=\norm{\bpar_{\nu+\rho}f(\bx_\out^{r^*})}
\leq
\norm{\bpar_{\nu}f_{\rho}(\bx_\out^{r^*})}
\leq \norm{\frac{1}{M}\sum_{m\in[M]}\nabla f_{\rho}(\bz_m^{r^*})}
~.
\]
By denoting $\bg^{r}:=\frac{1}{M}\sum_{m\in[M]}\nabla f_{\rho}(\bz_m^{r}),~r\in[R]$ we get 
that it suffices to show that
\begin{equation} \label{eq: g<eps seeked}
\Pr\left[\norm{\bg^{r^*}}^2\leq\epsilon^2\right]
=\Pr\left[\norm{\bg^{r^*}}\leq\epsilon\right]\geq1-\gamma~.
\end{equation}
By definition of $r^*$ we have
\begin{align*}
    \norm{\hat{\bg}^{r^*}}^2
    =\min_{r\in[R]}\norm{\hat{\bg}^r}^2
    \leq \min_{r\in[R]}\left(2\norm{\bg^r}^2+2\norm{\hat{\bg}^r-\bg^r}^2\right)
\leq2\left(\min_{r\in[R]}\norm{\bg^r}^2+\max_{r\in[R]}\norm{\hat{\bg}^r-\bg^r}^2\right)~, 
\end{align*}
thus
\begin{align}
    \norm{\bg^{r^*}}^2
    &\leq 2\norm{\hat{\bg}^{r^*}}^2+2\norm{\hat{\bg}^{r^*}-\bg^{r^*}}^2 \nonumber
    \\
&\leq 4\left(\min_{r\in[R]}\norm{\bg^r}^2+\max_{r\in[R]}\norm{\hat{\bg}^r-\bg^r}^2\right)+2\norm{\hat{\bg}^{r^*}-\bg^{r^*}}^2 \nonumber
    \\
&\leq4\cdot\min_{r\in[R]}\norm{\bg^r}^2+6\cdot\max_{r\in[R]}\norm{\hat{\bg}^r-\bg^r}^2~. \label{eq: g before prob}
\end{align}
We now turn to bound each of the summand above with high probability. First, by Theorem~\ref{thm: upper} we can set
$T=O\left(\frac{d\Delta L_0^2}{\delta\epsilon^3}\right)$ so that $\E[\norm{\bg^{r}}]\leq\frac{\epsilon}{8}$ for any $r\in[R]$, hence by Markov's inequality
\[
\Pr\left[4\cdot\min_{r\in R}\norm{\bg^r}^2>\frac{\epsilon^{2}}{4}\right]
=\Pr\left[\min_{r\in R}\norm{\bg^r}>\frac{\epsilon}{4}\right]
\leq\prod_{r\in[R]}\Pr\left[\norm{\bg^r}>\frac{\epsilon}{4}\right]
\leq 2^{-R}~.
\]
By setting $R\geq\lceil\log_2(2/\gamma)\rceil$ so that $2^{-R}\leq\frac{\gamma}{2}$, we conclude that
\begin{equation} \label{eq: summand 1}
    \Pr\left[4\cdot\min_{r\in R}\norm{\bg^r}^2>\frac{\epsilon^{2}}{4}\right]
    \leq \frac{\gamma}{2}~.
\end{equation}
For the second summand in \eqref{eq: g before prob}, note that for all $r\in[R]:$
\begin{align*}
    \E\left[\hat{\bg}^r\right]
    &=\E\left[\frac{1}{S}\sum_{s=1}^{S}\bg^{r}_{s}\right]
    =\frac{1}{S}\sum_{s=1}^{S}\left(\frac{1}{M}\sum_{m=1}^{M}\E\left[\bg^r_{m,s}\right]\right)
    \\
    &=\frac{1}{M}\sum_{m=1}^{M}\left(\frac{1}{S}\sum_{s=1}^{S}\E\left[\bg^r_{m,s}\right]\right)
    \overset{\mathrm{Lemma~\ref{lem: estimator}}}{=}\frac{1}{M}\sum_{m=1}^{M}\nabla f_{\rho}
    (\bz_m^{r})
    =\bg^{r}~,
\end{align*}
thus $\E[\hat{\bg}^r-\bg^{r}]=0$, and that it follows from the second claim in Lemma~\ref{lem: estimator} that for any $r\in[R],s\in[S]:$
\begin{align*}
    \E\left[\norm{\hat{\bg}^r_s-\bg^{r}}^2\right]
    \leq\frac{16\sqrt{2\pi}dL_{0}^2}{M}~.
\end{align*}
Noting that $(\bg^r_{1}-\bg^r),\dots,(\bg^r_{S}-\bg^r)$ are independent as they are functions of the independent samples $\xi_{m,1},\dots,\xi_{m,S},~m\in[M]$, we apply a simple tail bound for the norm of a sum of independent vectors (Lemma~\ref{lem: concentration} in the appendix) to get for any $\lambda>0:$
\begin{align*}
\Pr\left[\norm{\hat{\bg}^r-\bg^r}^2\geq \lambda\frac{16\sqrt{2\pi}dL_{0}^2}{MS}\right]
&=\Pr\left[\norm{\frac{1}{S}\sum_{s=1}^{S}(\bg^r_{s}-\bg^r)}^2\geq \lambda\frac{16\sqrt{2\pi}dL_{0}^2}{MS}\right]
\\
&=\Pr\left[\norm{\sum_{s=1}^{S}(\bg^r_{s}-\bg^r)}^2\geq \lambda S\cdot\frac{16\sqrt{2\pi}dL_{0}^2}{M}\right]
\leq\frac{1}{\lambda}~,
\end{align*}
hence by the union bound
\begin{align*}
    \Pr\left[\max_{r\in[R]}\norm{\hat{\bg}^r-\bg^r}^2\geq \lambda\frac{16\sqrt{2\pi}dL_{0}^2}{MS}\right]\leq\frac{R}{\lambda}~.
\end{align*}
Setting $\lambda:=\lceil\frac{2R}{\gamma}\rceil$ so that $\frac{R}{\lambda}\leq\frac{\gamma}{2}$, we see that $S\gtrsim \frac{dL_{0}^2\log(1/\gamma)}{M\epsilon^2\gamma}$ suffices for having
$\lambda\frac{16\sqrt{2\pi}dL_{0}^2}{MS}\leq \frac{\epsilon^2}{4}$, under which the inequality above shows that
\begin{equation} \label{eq: summand 2}
\Pr\left[\max_{r\in[R]}\norm{\hat{\bg}^r-\bg^r}^2\geq \frac{\epsilon^2}{4}\right]\leq\frac{\gamma}{2}~.
\end{equation}
By combining \eqref{eq: summand 1} and \eqref{eq: summand 2} and applying the union bound to get \eqref{eq: g before prob}, we have proved \eqref{eq: g<eps seeked} as required.
Finally, recalling that $\textsc{GradEstimator}$ (Algorithm~\ref{alg: grad}) requires 2 noisy function evaluations, it is clear that the total number of evaluations performed by Algorithm~\ref{alg: high prob} is bounded by
\[
2R\cdot \left(T+MS\right)
=O\left(\frac{d L_0^2\Delta\log(1/\gamma)}{\delta\epsilon^3}+\frac{d L_{0}^2\log^2(1/\gamma)}{\gamma\epsilon^2}\right)~.
\]

\subsection*{Acknowledgements}
The authors would like to thank the anonymous JMLR reviewers for their helpful suggestions, and in particular for pointing out a modified assignment of hyper-parameters which led to an improved result with respect to the Lipschitz constant.
This research is supported in part by European Research Council (ERC) grant 754705. GK is supported by an Azrieli Foundation graduate fellowship.

\bibliographystyle{plainnat}
\bibliography{bib}

\appendix

\section{Concentration Lemma}

\begin{lemma} \label{lem: concentration}
Let $X_1,\dots,X_N\in\reals^d$ be independent random vectors such that
for all $i\in[N]:\E[X_{i}]=\mathbf{0},~\E[\norm{X_{i}}^2]\leq \sigma_{i}^2$. Then $\E\left[\norm{\sum_{i=1}^{N}X_i}^2\right]\leq\sum_{i=1}^{N}\sigma_i^{2}$. In particular, for any $\lambda>0:$
\[
\Pr\left[\norm{\sum_{i=1}^{N}X_i}^2\geq\lambda\cdot \sum_{i=1}^{N}\sigma_i^{2} \right]\leq\lambda^{-1}~.
\]
\end{lemma}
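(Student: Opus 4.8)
\textbf{Proof proposal for Lemma~\ref{lem: concentration}.}
The plan is to prove the second-moment bound first by a direct expansion, and then obtain the tail bound as an immediate consequence of Markov's inequality. First I would write $\norm{\sum_{i=1}^N X_i}^2 = \sum_{i=1}^N \norm{X_i}^2 + \sum_{i\neq j} \langle X_i, X_j\rangle$ and take expectations of both sides. For the diagonal terms, linearity of expectation together with the hypothesis $\E[\norm{X_i}^2]\leq\sigma_i^2$ gives $\sum_i \E[\norm{X_i}^2] \leq \sum_i \sigma_i^2$. For each off-diagonal term with $i\neq j$, I would expand the inner product coordinatewise, $\langle X_i, X_j\rangle = \sum_{k=1}^d X_{i,k}X_{j,k}$, and use independence of $X_i$ and $X_j$ to factor $\E[X_{i,k}X_{j,k}] = \E[X_{i,k}]\E[X_{j,k}]$, which vanishes because $\E[X_i] = \mathbf 0$. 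Hence all cross terms contribute zero and $\E\left[\norm{\sum_{i=1}^N X_i}^2\right] \leq \sum_{i=1}^N \sigma_i^2$.

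For the second claim, I would simply apply Markov's inequality to the nonnegative random variable $Y := \norm{\sum_{i=1}^N X_i}^2$: for any $\lambda > 0$,
\[
\Pr\left[Y \geq \lambda\sum_{i=1}^N\sigma_i^2\right] \leq \frac{\E[Y]}{\lambda\sum_{i=1}^N\sigma_i^2} \leq \frac{\sum_{i=1}^N\sigma_i^2}{\lambda\sum_{i=1}^N\sigma_i^2} = \frac{1}{\lambda}~,
\]
which is exactly the stated bound. (If $\sum_i\sigma_i^2 = 0$ the inequality is vacuous, so one may assume it is positive.)

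There is no serious obstacle here; the argument is entirely routine. The only point requiring a small amount of care is the vanishing of the cross terms, where one must invoke both independence and the zero-mean assumption (pairwise independence and zero means would already suffice), and the trivial observation that the final inequality is monotone in $\sigma_i^2$, so the bound $\E[\norm{X_i}^2]\leq\sigma_i^2$ propagates through without loss. I would keep the write-up to a few lines.
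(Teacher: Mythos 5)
Your proof is correct and follows essentially the same approach as the paper: expand $\norm{\sum_i X_i}^2$, use linearity of expectation and independence with the zero-mean assumption to kill the cross terms, then apply Markov's inequality. The coordinatewise expansion of the inner product and the remark about the $\sum_i\sigma_i^2=0$ edge case are harmless extra detail.
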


\begin{proof}
By linearity of expectation we have
    \[
    \E\left[\norm{\sum_{i\in[N]}X_i}^2\right]
    =\sum_{i\in[N]}\E\left[\norm{X_i}^2\right]
    +\sum_{i\neq j\in[N]}\E[\inner{X_i,X_j}]
    =\sum_{i\in[N]}\E\left[\norm{X_i}^2\right]
    \leq \sum_{i\in[N]}\sigma_i^2~,
    \]
    where we used the assumption that for any $i\neq j:X_i,X_j$ are independent, thus $\E[\inner{X_i,X_j}]=0$. The second claim follows from Markov's inequality.
\end{proof}

\end{document}